\documentclass{amsart}

\usepackage[all]{xypic}
\usepackage{epsf}
\usepackage[centertags]{amsmath}
\usepackage{amsfonts}
\usepackage{amssymb}
\usepackage{amsthm}
\usepackage{newlfont}

 \newtheorem{thm}{Theorem}[section]

 \newtheorem{prop}[thm]{Proposition}
 \theoremstyle{definition}
 \newtheorem{defn}[thm]{Definition}
 \theoremstyle{remark}
 \newtheorem{rem}[thm]{Remark}
 \theoremstyle{remark}
 
 \theoremstyle{definition}
 \newtheorem{notn}[thm]{Notation}
 \numberwithin{equation}{section}

 \font \cyr=wncyr10
 \define\Sh{\hbox{\cyr Sh}}

 \newcommand{\an}{\mathrm{an}}

 \newcommand{\Spec}{\mathrm{Spec}}
 \newcommand{\Frob}{\mathrm{Frob}}
 \newcommand{\Spf}{\mathrm{Spf}}
 
 \newcommand{\Ver}{\mathrm{Ver}}
 \newcommand{\Ed}{\mathrm{Ed}}
 \newcommand{\Aut}{\mathrm{Aut}}
 
 \newcommand{\Supp}{\mathrm{Supp}}
 \newcommand{\Pic}{\mathrm{Pic}}
 
 \newcommand{\ord}{\mathrm{ord}}
 \newcommand{\alg}{\mathrm{alg}}

 \newcommand{\Gal}{\mathrm{Gal}}
 \newcommand{\GL}{\mathrm{GL}}
 
 \newcommand{\PGL}{\mathrm{PGL}}
 \newcommand{\Div}{\mathrm{Div}}
 
 \newcommand{\sep}{\mathrm{sep}}

 \newcommand{\un}{\mathrm{ur}}

 \newcommand{\Stab}{\mathrm{Stab}}
 \newcommand{\Fix}{\mathrm{Fix}}

 \newcommand{\Odd}{\mathrm{Odd}}
 
 \renewcommand{\mod}{\mathrm{mod}}
 \newcommand{\Nr}{\mathrm{Nr}}
 
 \newcommand{\fr}{\mathfrak r}
 
 \newcommand{\fp}{\mathfrak p}
 \newcommand{\fn}{\mathfrak n}
 \newcommand{\fm}{\mathfrak m}

 \newcommand{\fD}{\mathfrak D}
 \newcommand{\fP}{\mathfrak P}
 \newcommand{\cO}{\mathcal{O}}

 \newcommand{\cC}{\mathcal{C}}
 
 \renewcommand{\cD}{\mathcal{D}}
 
 \newcommand{\cG}{\mathcal{G}}

 \newcommand{\cE}{\mathcal{E}}

 \newcommand{\cJ}{\mathcal{J}}
 \newcommand{\cI}{\mathcal{I}}
 \renewcommand{\cL}{\mathcal{L}}
 
 \newcommand{\cT}{\mathcal{T}}

 \newcommand{\C}{\mathbb{C}}
 \newcommand{\F}{\mathbb{F}}
 \newcommand{\Q}{\mathbb{Q}}
 \newcommand{\Z}{\mathbb{Z}}

 \newcommand{\M}{\mathbb{M}}
 \newcommand{\N}{\mathbb{N}}
 \newcommand{\p}{\mathbb{P}}
 
 \newcommand{\T}{\mathbb{T}}


 \newcommand{\bD}{\bar{D}}

 \newcommand{\bs}{\setminus}
 
 \newcommand{\Fi}{F_\infty}

 \newcommand{\G}{\Gamma}
 \newcommand{\La}{\Lambda}
 \newcommand{\la}{\lambda}


\begin{document}

\title[$\cD$-elliptic sheaves and odd Jacobians]
{$\cD$-elliptic sheaves and odd Jacobians}

\author{Mihran Papikian}

\address{Department of Mathematics, Pennsylvania State University, University Park, PA 16802}

\email{papikian@math.psu.edu}

\thanks{The author was supported in part by NSF grant DMS-0801208.}

\subjclass[2010]{11G09, 11G18, 11G20}


\begin{abstract}
We examine the existence of rational divisors on 
modular curves of $\cD$-elliptic sheaves and on Atkin-Lehner quotients of these curves 
over local fields. Using a criterion of Poonen and Stoll, we show that in infinitely many 
cases the Tate-Shafarevich groups of the Jacobians of these Atkin-Lehner 
quotients have non-square orders. 
\end{abstract}


\maketitle


\section{Introduction} 

Let $F$ is a global field. Let $C$ be a smooth projective geometrically
irreducible curve of genus $g$ over $F$. Denote by $|F|$ the set 
of places of $F$. For $x\in |F|$, denote by $F_x$ the completion of $F$ at $x$. 
A place $x\in |F|$ is called \textit{deficient} for $C$ if $C_{F_x}:=C\times_F F_x$ 
has no $F_x$-rational divisors of  degree $g-1$, cf. \cite{PS}. 
It is known that the number of deficient places is finite. Let $J$ be the Jacobian variety of $C$. Assume 
the Tate-Shafarevich group $\Sh(J)$ is finite.
In \cite{PS}, Poonen and Stoll show that the order of $\Sh(J)$ can be a square as 
well as twice a square. In the first case $J$ is called \textit{even}, and in the second case $J$ is called \textit{odd}. 
The parity of the number of deficient places is directly related to the parity of $J$ \cite[Section 8]{PS}:

\begin{thm}\label{thmPS}
 $J$ is even if and only if the number of deficient places for $C$ is even. 
\end{thm}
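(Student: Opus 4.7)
The strategy is to analyze the Cassels--Tate pairing on $\Sh(J)$ and to identify an obstruction class whose residue modulo $2\Sh(J)$ is controlled by the set of deficient places of $C$.

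First, since $\Sh(J)$ is assumed finite, the Cassels--Tate pairing $\langle\cdot,\cdot\rangle\colon \Sh(J)\times \Sh(J)\to \Q/\Z$ is non-degenerate and skew-symmetric. A standard lemma on finite abelian groups carrying such a pairing says that $a\mapsto \langle a,a\rangle$ is a homomorphism into $\tfrac{1}{2}\Z/\Z$, and non-degeneracy produces a canonical class $c\in \Sh(J)$ with $\langle a,a\rangle=\langle a,c\rangle$ for all $a$. The order $|\Sh(J)|$ is a perfect square iff the pairing is alternating iff $c\in 2\Sh(J)$, so the theorem reduces to a computation of $c$ modulo $2\Sh(J)$.

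The second step is to identify $c$ in geometric terms, using the theta polarization on $J=\Pic^0(C)$. The natural object is the torsor $\Pic^{g-1}(C)$ under $J$: its class in $\rH^1(F,J)$ is $2$-torsion, because $2\cdot [\Pic^{g-1}(C)]=[\Pic^{2g-2}(C)]$ is trivial thanks to the $F$-rational canonical class $K_C$. Moreover $[\Pic^{g-1}(C)]$ becomes trivial in $\rH^1(F_x,J)$ precisely when $C_{F_x}$ carries an $F_x$-rational divisor of degree $g-1$, i.e. exactly at the non-deficient places. The plan is to show that, under the theta polarization, the Poonen--Stoll obstruction $c$ is represented by $[\Pic^{g-1}(C)]$ together with canonical local corrections that land it inside $\Sh(J)$.

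Finally, to extract the parity of $c$ modulo $2\Sh(J)$, I would employ Tate local duality together with the global reciprocity relation $\sum_x \mathrm{inv}_x=0$. Local pairing against $[\Pic^{g-1}(C)]$ vanishes at every non-deficient place, while at each deficient place it has order exactly $2$ and contributes $\tfrac{1}{2}\in \Q/\Z$. Summing yields that $c\in 2\Sh(J)$ if and only if the number of deficient places is even, giving the theorem. The main technical obstacle is the local-to-global bookkeeping that produces the honest class $c\in \Sh(J)$ from the non-locally-trivial $[\Pic^{g-1}(C)]$ and confirms that its parity is governed precisely by the deficient set; this is the technical heart of the argument carried out in \cite{PS}.
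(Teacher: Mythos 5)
First, a point of reference: the paper does not actually prove Theorem \ref{thmPS}; it is imported verbatim from \cite[Section 8]{PS} and used as a black box. So your proposal has to be measured against the argument of Poonen and Stoll themselves, and in outline you have reproduced its architecture correctly: the Cassels--Tate pairing, the canonical class $c$ with $\langle a,a\rangle=\langle a,c\rangle$, the torsor $\Pic^{g-1}(C)$ whose class is killed by $2$ via the canonical divisor, and a sum of local invariants governed by the deficient places.

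There is, however, a genuine error in your reduction step. The chain of equivalences ``$\#\Sh(J)$ is a square $\iff$ the pairing is alternating $\iff$ $c\in 2\Sh(J)$'' is false for finite abelian groups with a nondegenerate antisymmetric pairing. Take $G=(\Z/2\Z)^2$ with $\langle e_i,e_j\rangle=\tfrac{1}{2}$ for $i=j$ and $0$ for $i\neq j$: this is nondegenerate and antisymmetric, $\#G=4$ is a square, yet the pairing is not alternating and the canonical element is $c=e_1+e_2\notin 2G=0$. The correct criterion --- and the actual content of the group-theoretic lemma in \cite{PS} --- is that $\#G$ is a square if $\langle c,c\rangle=0$ and twice a square if $\langle c,c\rangle=\tfrac{1}{2}$. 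Accordingly, the quantity your final step must compute is $\langle c,c\rangle\in\tfrac{1}{2}\Z/\Z$, not the residue of $c$ modulo $2\Sh(J)$; the local analysis should show $\langle c,c\rangle\equiv \tfrac{N}{2}$ where $N$ is the number of deficient places. A second, smaller gap: you assert that $[\Pic^{g-1}(C)]$ dies in $\rH^1(F_x,J)$ exactly at the non-deficient places, but local triviality of that torsor detects Galois-invariant divisor \emph{classes} of degree $g-1$, whereas deficiency concerns honest $F_x$-rational divisors; the two differ by a Brauer obstruction, and reconciling them is precisely part of the local bookkeeping you defer to \cite{PS}. As a plan your proposal points at the right theorem, but as written it would not become a proof without repairing the square-order criterion and supplying that deferred local analysis.
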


Using this theorem, Poonen and Stoll show that infinitely many hyperelliptic 
Jacobians over $\Q$ are odd 
for every even genus. Moreover, for   
certain explicit genus $2$ and $3$ curves over $\Q$ they are able to prove that $\Sh(J)$ is finite 
and has non-square order. In \cite{JL3}, applying Theorem \ref{thmPS} to quotients of Shimura curves 
under the action of Atkin-Lehner involutions, 
Jordan and Livn\'e show that infinitely many of these quotient curves have odd Jacobians. 

For function fields, Proposition 30 in \cite{PS} gives the following example: 
Let $J$ be the Jacobian of the genus $2$ curve 
$$
C: y^2=Tx^6+x-aT
$$
over $\F_q(T)$, where $q$ is odd, and $a\in \F_q^\times$ is a non-square. As one checks, only the 
place $\infty=1/T$ is deficient for $C$. Next, as is observed in \cite[p. 1141]{PS}, 
$C$ defines a rational surface over $\F_q$, so the Brauer group of that surface is finite. The main 
theorem in \cite{G-A} then implies that $\Sh(J)$ is also finite. 
Overall, $\Sh(J)$ is finite and has non-square order.  

In this paper we adapt the idea of Jordan and Livn\'e \cite{JL3} to $\F_q(T)$, and 
exhibit infinitely many curves over $\F_q(T)$ 
whose Jacobians are odd. These curves are obtained as quotients of modular curves of 
$\cD$-elliptic sheaves under the action of Atkin-Lehner involutions. We also show that 
only finitely many of these curves can be hyperelliptic, and in some cases prove that 
the Tate-Shafarevich groups in question are indeed finite. 


\section{Notation and terminology} 

\subsection{Notation} Let $F= \F_q(T)$ be the 
field of rational functions on the projective line $\p:=\p^1_{\F_q}$ over the finite field 
$\F_q$. Fix the place $\infty:=1/T$.  For $x\in |F|$, 
denote by $F_x$ and $\cO_x$ the completion of $F$ and $\cO_{\p,x}$ at $x$, respectively. 
The residue field of $\cO_x$ is denoted by $\F_x$ and the cardinality of $\F_x$ is denoted by 
$q_x$. The degree of $x$ is $\deg(x):=[\F_x:\F_q]$. Let $\varpi_x$ be a uniformizer of $\cO_x$. 
We assume that the valuation $\ord_x:F_x\to \Z$ is normalized by $\ord_x(\varpi_x)=1$. 
Let $A=\F_q[T]$ be the polynomial ring over $\F_q$; 
this is the subring of $F$ consisting of functions which are regular away from $\infty$. For 
a place $x\neq \infty$, 
let $\fp_x$ be the corresponding prime ideal of $A$, and $\wp_x\in A$ be the monic generator 
of $\fp_x$. 

For a ring $H$ with a unit element, we denote by $H^\times$ the group of its invertible elements.  

For $S\subset |F|$, put
$$
\Odd(S)=\left\{
         \begin{array}{ll}
           1, & \hbox{if all places in $S$ have odd degrees;} \\
           0, & \hbox{otherwise.}
         \end{array}
       \right.
$$

\subsection{Quaternion algebras} Let $D$ be a \textit{quaternion
algebra} over $F$, i.e., a $4$-dimensional $F$-algebra with center
$F$ which does not possess non-trivial two-sided ideals.  Denote $D_x:=D\otimes_F F_x$; 
this is a quaternion algebra over $F_x$. By Wedderburn's theorem
\cite[(7.4)]{Reiner}, a quaternion algebra is either a division algebra or 
is isomorphic to the algebra of $2\times 2$ matrices. 
We say that the algebra $D$ \textit{ramifies} (resp. \textit{splits}) 
at $x\in |F|$ if $D_x$ is a division algebra (resp. $D_x\cong \M_2(F_x)$).  
Let $R\subset |F|$ be the set of places where $D$ ramifies. It is known
that $R$ is a finite set of even cardinality, and for any choice of
a finite set $R\subset |F|$ of even cardinality there is a unique,
up to isomorphism, quaternion algebra ramified exactly at the places
in $R$; see \cite[p. 74]{Vigneras}. In particular, $D\cong \M_2(F)$
if and only if $R=\emptyset$. 
We denote the \textit{reduced norm} of $\alpha\in D$ by $\Nr(\alpha)$; for the 
definition see \cite[(9.6a)]{Reiner}. 

An \textit{$\cO_\p$-order} in $D$ is a sheaf of $\cO_\p$-algebras with generic fibre $D$ 
which is coherent and locally free as an $\cO_\p$-module. 
A \textit{$\cD$-bimodule} for an $\cO_\p$-order  $\cD$ in $D$ is an $\cO_\p$-module $\cI$ 
with left and right $\cD$-actions compatible with the $\cO_\p$-action and such that
$$
(\la i)\mu=\la(i\mu), \quad \text{for any }\la, \mu\in \cD \text{ and } i\in \cI. 
$$  
A $\cD$-bimodule $\cI$ is \textit{invertible} if there is another  $\cD$-bimodule $\cJ$ such that 
there are isomorphism of $\cD$-bimodules
$$
\cI\otimes_{\cD}\cJ\cong \cD, \quad \cJ\otimes_{\cD}\cI\cong \cD.
$$
The group of isomorphism classes of invertible $\cD$-bimodules will be denoted by $\Pic(\cD)$: the 
group operation  is $\cI_1\otimes_\cD\cI_2$, cf. 
\cite[(37.5)]{Reiner}. 


\subsection{Graphs}\label{SecG} We recall some of the terminology related to graphs, as presented in
\cite{SerreT} and \cite{Kurihara}. A graph $\cG$ consists of a set
of \textit{vertices} $\Ver(\cG)$ and a set of \textit{edges}
$\Ed(\cG)$. Every edge $y$ has \textit{origin} $o(y)\in \Ver(\cG)$,
\textit{terminus} $t(y)\in \Ver(\cG)$, and \textit{inverse} edge
$\bar{y}\in \Ed(\cG)$ such that $\bar{\bar{y}}=y$ and $o(y)=t(\bar{y})$, $t(y)=o(\bar{y})$.
The vertices $o(y)$ and
$t(y)$ are the \textit{extremities} of $y$. Note that it is allowed
for distinct edges $y\neq z$ to have $o(y)=o(z)$ and $t(y)=t(z)$. 
We say that two vertices are
\textit{adjacent} if they are the extremities of some edge. 
The graph $\cG$ is a \textit{graph with lengths} if we are given a map
$$
\ell=\ell_\cG: \Ed(\cG)\to \N=\{1,2,3, \cdots\}
$$
such that $\ell(y)=\ell(\bar{y})$.  An \textit{automorphism} of
$\cG$ is a pair $\phi=(\phi_1, \phi_2)$ of bijections $\phi_1: \Ver(\cG)\to
\Ver(\cG)$ and $\phi_2: \Ed(\cG)\to \Ed(\cG)$ such that $\phi_1(o(y))=o(\phi_2(y))$,
$\overline{\phi_2(y)}=\phi_2(\bar{y})$, and
$\ell(y)=\ell(\phi_2(y))$.

Let $\G$ be a group acting on a graph $\cG$ (i.e., $\G$ acts via
automorphisms). For $v\in \Ver(\cG)$, denote
$$
\Stab_\G(v)=\{\gamma\in \G\ |\ \gamma v=v\}
$$
the stabilizer of $v$ in $\G$. Similarly, let $\Stab_\G(y)=\Stab_\G(\bar{y})$
be the stabilizer of $y\in \Ed(\cG)$. There is a quotient
graph $\G\bs\cG$ such that $\Ver(\G\bs \cG)=\G\bs \Ver(\cG)$ and
$\Ed(\G\bs \cG)=\G\bs \Ed(\cG)$.


\subsection{Mumford uniformization}\label{ssAC} Let $\cO$ be a complete discrete valuation
ring with fraction field $K$, finite residue field $k$ and a
uniformizer $\pi$. Let $\G$ be a subgroup of $\GL_2(K)$ whose image $\overline{\G}$ in $\PGL_2(K)$ 
is discrete with compact quotient. There is a formal
scheme $\widehat{\Omega}$ over $\Spf(\cO)$ 
which is equipped with a natural action of $\PGL_2(K)$ and parametrizes certain formal 
groups. Kurihara in \cite{Kurihara} extended Mumford's fundamental result \cite{Mumford} and proved the 
following: there is a normal, proper and flat 
scheme $X^\G$ over 
$\Spec(\cO)$ such that the formal completion of $X^\G$ along its closed fibre is isomorphic 
to the quotient $\G\bs \widehat{\Omega}$. The generic 
fibre $X^\G_K$ is a smooth, geometrically integral curve over $K$. The closed fibre 
$X^\G_k$ is reduced with normal crossing
singularities, and every irreducible component is isomorphic to
$\p^1_k$. If $x$ is a double point on $X^\G_k$, then there
exists a unique integer $m_x$ for which the completion of
$\cO_{x,X}\otimes_\cO \widehat{\cO}^\un$ is isomorphic to the
completion of $\widehat{\cO}^\un[t,s]/(ts-\pi^{m_x})$. Here $\widehat{\cO}^\un$ 
denotes the completion of the maximal unramified extension of $\cO$.

\begin{rem} $\widehat{\Omega}$ is the formal scheme associated to Drinfeld's
non-archimedean half-plane $\Omega=\p^{1,\an}_K-\p^{1,\an}_K(K)$
over $K$. For the description of the rigid-analytic structure of
$\Omega$ and the construction of $\widehat{\Omega}$ we refer to Chapter I in 
\cite{BC}. 
\end{rem}

The \textit{dual graph} $\cG$ of $X^\G$ is the following graph
with lengths. The vertices of $\cG$ are the irreducible components
of $X^\G_k$. The edges of $\cG$, ignoring the orientation, are the
singular points of $X^\G_k$. If $x$ is a double point and $\{y,
\bar{y}\}$ is the corresponding edge of $\cG$, then the extremities
of $y$ and $\bar{y}$ are the irreducible components passing through
$x$; choosing between $y$ or $\bar{y}$ corresponds to choosing one
of the branches through $x$. Finally, $\ell(y)=\ell(\bar{y})=m_x$.

Let $\cT$ be the graph whose vertices $\Ver(\cT)=\{[\La]\}$ are the
homothety classes of $\cO$-lattices in $K^2$, and two vertices $[\La]$ and
$[\La']$ are adjacent if we can choose representatives $L\in [\La]$
and $L'\in [\La']$ such that $L'\subset L$ and $L/L'\cong k$. One
shows that $\cT$ is an infinite tree in which every vertex is adjacent to exactly $\# k+ 1$ 
other vertices. This is the \textit{Bruhat-Tits tree} of
$\PGL_2(K)$, cf. \cite[p. 70]{SerreT}. The group $\GL_2(K)$ acts on $\cT$ as the group of linear
automorphisms of $K^2$, so the group $\G$ also acts on $\cT$. 
We assign lengths to the edges of the quotient graph
$\G\bs \cT$: for $y\in \Ed(\G\bs \cT)$ let
$\ell(y)=\#\Stab_{\overline{\G}}(\tilde{y})$, where $\tilde{y}$ is a preimage of $y$
in $\cT$.
By Proposition 3.2 in \cite{Kurihara}, there is an isomorphism $\cG\cong \G\bs
\cT$ of graphs with lengths. 

\begin{notn} For $x\in |F|$, we denote Mumford's formal scheme 
over $\Spf(\cO_x)$ by $\widehat{\Omega}_x$, and the Bruhat-Tits tree of $\PGL_2(F_x)$ 
by $\cT_x$. 
\end{notn}


\section{Modular curves of $\cD$-elliptic sheaves} 

\subsection{$\cD$-elliptic sheaves}\label{ssDES}
The notion of $\cD$-elliptic sheaves was introduced in \cite{LRS}. Here we follow 
\cite{Spiess}, which gives a somewhat different (but equivalent) 
definition of  $\cD$-elliptic sheaves that is more convenient for our purposes. 

From now on we assume that $D$ is a division quaternion algebra which is split at $\infty$. 
Let $\cD$ be an $\cO_\p$-order in $D$ 
such that $\cD_x:=\cD\otimes_{\cO_\p}\cO_x$ is a maximal order in $D_x$
for any $x\neq \infty$, and $\cD_\infty$ is isomorphic to the subring of $\M_2(\cO_\infty)$ 
which are upper triangular modulo $\varpi_\infty$. Let $\cD(-\frac{1}{2}\infty)$ 
denote the two-sided ideal in $\cD$ given by $\cD(-\frac{1}{2}\infty)_x=\cD_x$ for all 
$x\neq \infty$, and $\cD(-\frac{1}{2}\infty)_\infty$ is the radical of $\cD_x$. Concretely, 
 $\cD(-\frac{1}{2}\infty)_\infty$ is the ideal of $\cD_\infty$ 
consisting of matrices which are upper triangular modulo $\varpi_\infty$ with zeros on the diagonal. 

\begin{defn}
A \textit{$\cD$-elliptic sheaf with pole $\infty$} 
over an $\F_q$-scheme $S$ is a pair $E=(\cE, t)$ consisting 
of a locally free right $\cD\boxtimes \cO_S$-module of rank $1$ and an injective homomorphism 
of $\cD\boxtimes \cO_S$-modules 
$$
t:(\mathrm{id}_\p\times \Frob_S)^\ast(\cE\otimes_\cD \cD(-\frac{1}{2}\infty))\to \cE
$$
such that the cokernel of $t$ is supported on the graph $\G_z\subset \p\times_{\Spec \F_q}S$ 
of a morphism $z:S\to \p$ and is a locally free $\cO_S$-module of rank $2$. 
\end{defn}

\begin{rem}
In \cite{Spiess}, this definition is given for an arbitrary central simple algebra $D$ over an 
arbitrary function field $F$. Moreover, the order $\cD$ is only assumed to be hereditary. 
\end{rem}

\begin{thm}\label{thmDES}
The moduli stack of $\cD$-elliptic sheaves of fixed degree $\deg(\cE)=-1$ admits a coarse 
moduli scheme $X^R$. The canonical morphism $X^R\to \p$ is projective with geometrically 
irreducible fibres 
of pure relative dimension $1$, and it is smooth over 
$\p-R-\infty$. 
\end{thm}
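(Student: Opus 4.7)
The plan is to invoke the foundational construction of the moduli stack of $\cD$-elliptic sheaves from Laumon-Rapoport-Stuhler (and in the form used here, from Spiess) and then extract the coarse moduli space and verify each claimed property separately.

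First I would set up the moduli problem as a stack $\cX^R$ over $\p$ classifying $\cD$-elliptic sheaves of degree $-1$, with the morphism to $\p$ given by the pole map $E\mapsto z$. The key point is that since $D$ is a division algebra, every $\cD$-elliptic sheaf $E=(\cE,t)$ has a finite automorphism group: automorphisms commuting with $t$ must fix $\cE$ up to an $F$-algebra automorphism of a division algebra, and this forces $\Aut(E)$ to be contained in $\cD^\times$, which is finite. Together with the standard verification that $\cX^R$ is algebraic, locally of finite type over $\p$, this makes $\cX^R$ a Deligne-Mumford stack, and the Keel-Mori theorem then produces a coarse moduli scheme $X^R$.

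Next I would prove properness of $X^R\to\p$ via the valuative criterion. Given a DVR $\cO$ with fraction field $K$ and a $\cD$-elliptic sheaf over $\Spec K$, one extends it to $\Spec\cO$ by choosing a $\cD\boxtimes\cO$-lattice invariant under the Frobenius structure, using that lattices in a rank-$1$ $D\otimes K$-module are controlled by a Bruhat-Tits building whose vertices form a discrete set; the divisibility condition given by the $\infty$-component of the order $\cD$ ensures that the sequence of lattices built from iterating $t$ eventually stabilizes, giving the required extension. Projectivity then follows from properness together with the existence of an ample bundle coming from the determinant of $\cE$ on the rigid-analytic side (or one simply cites the projectivity established in \cite{LRS, Spiess}). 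Purity of relative dimension $1$ is a direct dimension count for the tangent space.

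For smoothness away from $R\cup\{\infty\}$, I would carry out the deformation theory: at a geometric point $E$ lying over $x\in\p-R-\infty$, the obstruction to lifting $E$ to first order lies in a group involving $\Ext^1$ of $\cD$-modules; maximality of $\cD_x$ (which holds for $x\neq\infty$) and the splitting $D_x\cong\M_2(F_x)$ (for $x\notin R$) together imply that the local deformation ring is formally smooth of relative dimension $1$ over $\cO_x$. The main obstacle here is the bookkeeping for the $\infty$-component where $\cD$ is only a hereditary (Iwahori-type) order; fortunately $\infty$ is excluded from the smooth locus, and the non-maximality at $\infty$ manifests itself only in the description of the formal neighbourhood there via $\widehat{\Omega}_\infty$. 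Finally, geometric irreducibility of the fibres over $\p-R-\infty$ follows from the rigid-analytic uniformization of $X^R_{F_\infty}$ by $\widehat{\Omega}_\infty$ as in Section 2.4, combined with strong approximation for the group $D^\times$ (which is valid because $D$ is split at $\infty$): strong approximation shows that the adelic double coset space indexing the geometric components is a single orbit, so each fibre is geometrically connected, hence geometrically irreducible once smoothness is known.
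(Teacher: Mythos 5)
The paper's entire proof of this theorem is a one-sentence citation to the main results of Laumon--Rapoport--Stuhler as reformulated in \S 4.3 of Spiess, and your proposal is essentially an outline of what those references actually prove (Deligne--Mumford property from finiteness of automorphism groups, a Langton-type lattice argument for properness, deformation theory for smoothness away from $R\cup\{\infty\}$, and a component count via strong approximation), still deferring to the same sources for the hard steps --- so you are taking essentially the same route, just with the citation unpacked. The only caveats worth recording are that strong approximation should be applied to the reduced-norm-one subgroup of $D^\times$ rather than to $D^\times$ itself, and that your ``connected plus smooth implies irreducible'' step says nothing about the fibres over $R$ and $\infty$, which are exactly the ones where smoothness fails.
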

\begin{proof}
This theorem follows from one of the main results in \cite{LRS}; 
cf. \cite[$\S$4.3]{Spiess}. 
\end{proof}

The genus of the curve $X^R_F$ is given by the formula (see \cite{PapGenus})
\begin{equation}\label{eqGenus}
g(X^R)=1+\frac{1}{q^2-1}\prod_{x\in R}(q_x-1)-\frac{q}{q+1}\cdot
2^{\# R-1}\cdot \Odd(R).
\end{equation}

\subsection{Atkin-Lehner involutions}
Let $\fP_x$ be the radical of $\cD_x$. 
By \cite[(39.1)]{Reiner}, $\fP_x$ is a two-sided ideal in $\cD_x$, and 
every two-sided ideal of $\cD_x$ is an integral power of $\fP_x$. 
It is known that there exists $\Pi_x\in \fP_x$ such that $\Pi_x\cD_x=\cD_x\Pi_x=\fP_x$. 
The 
positive integer $e_x$ such that $\fP_x^{e_x}=\varpi_x\cD_x$ is the \textit{index} 
of $\cD_x$. With this definition, $e_x=2$ if $x\in R\cup \infty$, and $e_x=1$, otherwise. 
Define the group of divisors 
$$
\Div(\cD):=\left\{\sum_{x\in |F|}n_xx\in \bigoplus_{x\in |F|}\Q x\ \bigg|\
e_xn_x\in \Z\text{ for any }x\in |F|\right\}.
$$
For a divisor $Z=\sum_{x\in |F|}n_xx\in \Div(\cD)$, let 
$\cD(Z)$ be the invertible $\cD$-bimodule given by
$\cD(Z)|_{\p-\Supp(Z)}=\cD|_{\p-\Supp(Z)}$ and $\cD(Z)_x=\fP_x^{-n_xe_x}$ for
all $x\in \Supp(Z)$. For each $f\in F^\times$ there is an associated divisor 
$\mathrm{div}(f)=\sum_{x\in |F|}\ord_x(f)x$, which we consider as an element of $\Div(\cD)$. 
It follows from \cite[(40.9)]{Reiner} that the sequence 
\begin{equation}\label{eqDivD}
0\to F^\times/\F_q^\times\xrightarrow{\mathrm{div}}\Div(\cD)\xrightarrow{Z\mapsto
\cD(Z)}\Pic(\cD)\to 0
\end{equation}
is exact, cf. \cite[$\S$3.2]{Spiess}. Let $\Div^0(\cD)\subset \Div(\cD)$ be the subgroup of degree 
$0$ divisors: $\sum_{x\in |F|}n_xx\in \Div^0(\cD)$ if $\sum_{x\in |F|}n_x\deg(x)=0$. 
Define $\Pic^0(\cD)$ to be the image of $\Div^0(\cD)$ in $\Pic(\cD)$. 
It is easy to check that $\Pic^0(\cD)\cong (\Z/2\Z)^{\# R}$, and is generated by the divisors 
$\left(\frac{\deg(x)}{2}\infty - \frac{1}{2}x\right)$, $x\in R$. 

If $\cL\in \Pic(\cD)$, then
$$
E=(\cE, t)\mapsto E\otimes \cL:=(\cE\otimes_\cD \cL,
t\otimes_\cD \mathrm{id}_\cL)
$$
defines an automorphism of the stack of $\cD$-elliptic sheaves. Moreover, if $\cL\in \Pic^0(\cD)$, then 
this action preserves the substack consisting of $(\cE, t)$ with $\deg(\cE)$ fixed, cf. \cite[$\S$4.1]{Spiess}. 
Hence $W:=\Pic^0(\cD)$ acts on $X^R$ by automorphisms. 

\begin{defn}We call the subgroup $W$ of $\Aut(X^R)$ the 
\textit{group of Atkin-Lehner 
involutions}, and we denote by $w_x\in W$, $x\in R$, 
the automorphism induced  by $\cD\left(\frac{\deg(x)}{2}\infty - \frac{1}{2}x\right)$. 
\end{defn}

\begin{rem}
It follows from \cite[Thm. 4.6]{PapHE} that if $\Odd(R)=0$, then $\Aut(X^R)=W$. 
\end{rem}

The \textit{normalizer} of $\cD_x$ in $D_x$ is the subgroup of $D_x^\times$ 
$$
N(\cD_x)=\{g\in D_x^\times\ |\ g\cD_x g^{-1}=\cD_x\}. 
$$
If $g\in N(\cD_x)$, then $g\cD_x$ is a two-sided ideal of $\cD_x$, so there 
exists $m\in \Z$ such that $g\cD_x=\fP_x^m$
Define $v_{\cD_x}(g)=\frac{m}{e_x}$. Note that 
for $g\in F_x\subset N(\cD_x)$, we have $\ord_x(g)=v_{\cD_x}(g)$. 

Let $
\cC(\cD):=\prod_{x\in |F|}'N(\cD_x)/F^\times\prod_{x\in
|F|}\cD_x^\times$, 
where $\prod_{x\in |F|}'N(\cD_x)$ denotes the
restricted direct product of the groups $\{N(\cD_x)\}_{x\in |F|}$
with respect to $\{\cD_x^\times\}_{x\in |F|}$. Given $a=\{a_x\}_x\in
\prod_{x\in |F|}'N(\cD_x)$, we put $\mathrm{div}(a)=\sum_{x\in
|F|}v_{\cD_x}(a_x)x$. The assignment $a\mapsto \cD(\mathrm{div}(a))$
induces an isomorphism \cite[Cor. 3.4]{Spiess}:
\begin{equation}\label{eq2.1}
\cC(\cD)\cong \Pic(\cD).
\end{equation}

Let $\cD^\infty:=H^0(\p-\infty, \cD)$; this is a maximal $A$-order in $D$. Let 
$\G^\infty:=(\cD^\infty)^\times$ be the units in $\cD^\infty$. Define the \textit{normalizer} of $\cD^\infty$ in $D$ as 
$$
N(\cD^\infty):=\{g\in D^\times\ |\ g\cD^\infty g^{-1}=\cD^\infty \}.
$$
Denote $\cC(\cD^\infty)=N(\cD^\infty)/F^\times\G^\infty$. Then (\ref{eq2.1}) induces an isomorphism 
\begin{equation}\label{eq2.2}
\cC(\cD^\infty)\cong \Pic^0(\cD). 
\end{equation}
By (37.25) and (37.28) in \cite{Reiner}, the natural homomorphism 
\begin{equation}\label{eqI}
N(\cD^\infty)/F^\times\G^\infty \to \prod_{x\in |F|-\infty} N(\cD_x)/F_x^\times \cD_x^\times
\end{equation}
is an isomorphism. Next, by (37.26) and (37.27) in \cite{Reiner}, 
$$
N(\cD_x)/F_x^\times \cD_x^\times\cong\left\{
         \begin{array}{ll}
           1, & \hbox{if $x\not \in R\cup \infty$;} \\
           \Z/2\Z, & \hbox{if $x\in R$.}
         \end{array}
       \right.
$$
For $x\in R$, the non-trivial element of $N(\cD_x)/F_x^\times \cD_x^\times$ 
is the image of $\Pi_x$. According to \cite[(34.8)]{Reiner}, there exist elements 
$\{\la_x\in \cD^\infty\}_{x\in R}$ such that $\Nr(\la_x)A=\fp_x$. The image of $\la_x$ in $\cD_x$ 
can be taken as $\Pi_x$. Overall, $\cC(\cD^\infty)\cong (\Z/2Z)^{\# R}$ is generated by 
$\la_x$'s, and the isomorphism (\ref{eq2.2}) is given by $w_x\mapsto \la_x$. 


\subsection{Uniformization theorems} 
Since $D_\infty\cong \M_2(\Fi)$, the group $\G^\infty$ can be considered as a discrete cocompact 
subgroup of $\GL_2(\Fi)$ via an embedding 
$$
\G^\infty\hookrightarrow D^\times(\Fi)\cong \GL_2(\Fi).
$$
Let $\widehat{X}^R_{\cO_\infty}$ denote 
the completion of $X^R_{\cO_\infty}$ along its special fibre. 
By a theorem of Blum and Stuhler \cite[Thm. 4.4.11]{BS}, there an isomorphism of 
formal $\cO_\infty$-schemes 
\begin{equation}\label{eqBS}
\G^\infty\bs\widehat{\Omega}_\infty\cong \widehat{X}^R_{\cO_\infty}, 
\end{equation}
which is compatible with the action of $W$; see \cite[$\S$4.6]{Spiess}. More precisely, 
the action of $w_x$ on 
$\G^\infty\bs\widehat{\Omega}_\infty$ induced by (\ref{eqBS}) is given by the action 
of $\la_x$ considered as an element 
of $\GL_2(\Fi)$. Note that $\la_x$ is in the normalizer of $\G^\infty$, so 
it acts on the quotient $\G^\infty\bs\widehat{\Omega}_\infty$ and this action 
does not depend on a particular choice of $\la_x$. 

\vspace{0.1in}

Now fix some $x\in R$. Let $\bD$ be the quaternion algebra over $F$ which is ramified
exactly at $(R-x)\cup \infty$. Fix a maximal $A$-order $\fD$ in
$\bD(F)$, and denote
\begin{align*}
&A^x=A[\wp_x^{-1}];\\
&\fD^x=\fD\otimes_A A^x;\\
&\fD^{x,2}=\{\gamma\in \fD^x\ |\ \ord_x(\Nr(\gamma))\in 2\Z\};\\
&\G^x=(\fD^{x,2})^\times.
\end{align*}
If we fix an identification of $\bD_x$ with $\M_2(F_x)$, then 
$\G^x$ is a subgroup of $\GL_2(F_x)$ whose image $\G^x/(A^x)^\times$ in $\PGL_2(F_x)$ 
is discrete and cocompact. Let $\cO_x^{(2)}$ 
be the unramified quadratic extension of $\cO_x$. 
Let $\gamma_x\in \fD^x$ be an element such that $\Nr(\gamma_x)A=\fp_x$. Such 
$\gamma_x$ exists by \cite[(34.8)]{Reiner} and it normalizes $\G^x$, hence acts on  
$\G^x\bs \widehat{\Omega}_x$. Let $\widehat{X}^R_{\cO_x}$ denote 
the completion of $X^R_{\cO_x}$ along its special fibre.
By the analogue of the Cherednik-Drinfeld uniformization, proven in this context by Hausberger 
\cite{Hausberger}, there is an isomorphism of formal $\cO_x$-schemes 
\begin{equation}\label{eqHaus}
[(\G^x\bs \widehat{\Omega}_x)\otimes \cO_x^{(2)}]/(\gamma_x\otimes \Frob_x^{-1})\cong \widehat{X}^R_{\cO_x},
\end{equation}
where $\Frob_x:\cO_x^{(2)}\to \cO_x^{(2)}$ 
denotes the lift of the Frobenius homomorphism $a\mapsto a^{q_x}$ on 
$\overline{\F}_x$ to an $\cO_x$-homomorphism. 

Let $N(\fD^{x,2})$ be the normalizer of $\fD^{x,2}$ in $\bD$, and 
$$\cC(\fD^{x,2}):=N(\fD^{x,2})/F^\times \G^x.$$ 
As in (\ref{eqI}), the natural homomorphism 
$$
N(\fD^{x,2})/F^\times \G^x\to \prod_{y\in |F|-\infty} N(\fD^{x,2}_y)/F_y^\times (\fD^{x,2}_y)^\times
$$
is an isomorphism. 
The normalizer $N(\fD^{x,2}_x)$ is $F_x^\times (\fD^{x}_x)^\times$, so we have 
$$N(\fD^{x,2}_x)/F_x^\times (\fD^{x,2}_x)^\times\cong \Z/2\Z,$$ generated by $\gamma_x$. 
On the other hand, if $y\neq x$, then 
$$
N(\fD^{x,2}_y)/F_y^\times (\fD^{x,2}_y)^\times\cong N(\cD_y)/F_y^\times \cD_y^\times. 
$$
We see that 
$$
\cC(\fD^{x,2})\cong (\Z/2\Z)^{\# R},
$$
 generated by a set of elements $\{\gamma_y\in \fD^{x}\}_{y\in R}$ such that 
 $\Nr(\gamma_y)A=\fp_y$. The group $W$ is canonically isomorphic with 
 $\cC(\fD^{x,2})$ via $w_y\mapsto \gamma_y$. The isomorphism (\ref{eqHaus}) is 
 compatible with the action of $W$: for $y\in R$, the action of $w_y$ on the left hand-side 
 of (\ref{eqHaus})  is given by $\gamma_y$; see \cite[$\S$4.6]{Spiess}.
 
 
 \section{Main results} 
 
\begin{prop}\label{thm3.1} Denote by $\Div^d_{F_x}(X^R)$ the set of 
 Weil divisors on $X^R_{F_x}$ which are rational over $F_x$ and have degree $d$. 
\begin{enumerate}
\item If $x\not \in R$, then $\Div^d_{F_x}(X^R)\neq \emptyset$ for any $d$. 
\item If $x\in R$, then $\Div^d_{F_x}(X^R)\neq \emptyset$ for even $d$, 
and $\Div_{F_v}^d(X^R)=\emptyset$ for odd $d$. 
\end{enumerate}
 \end{prop}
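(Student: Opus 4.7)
The plan is to reduce each case to producing a suitable rational divisor on the special fibre of an $\cO_x$-model of $X^R$ and lifting by Hensel's lemma, with a parity obstruction in case (2) coming from the Frobenius twist in the Cherednik--Drinfeld uniformization. The model is smooth for $x\notin R\cup\infty$, and otherwise is provided by \eqref{eqBS} or \eqref{eqHaus}.

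For (1), if $x\notin R\cup\infty$, Theorem~\ref{thmDES} says $X^R_{\cO_x}$ is smooth, so its special fibre is a smooth, proper, geometrically integral curve over $\F_x$ and admits a divisor of degree $1$ by F.~K.~Schmidt's theorem; Hensel's lemma then lifts any closed point of the special fibre to a closed point of the same degree on $X^R_{F_x}$, producing divisors of every degree. For $x=\infty$ (automatically outside $R$ since $D$ splits at $\infty$), I would invoke \eqref{eqBS}: every component of the special fibre of $\widehat X^R_{\cO_\infty}$ is an un-twisted $\p^1_{\Fi}$, so it carries abundant smooth $\Fi$-rational points, which lift to $F_\infty$-rational points of $X^R_{F_\infty}$.

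For (2), fix $x\in R$ and use \eqref{eqHaus}. The existence half is easy: every geometric component of the special fibre is a $\p^1$, so a smooth $\F_x^{(2)}$-rational point paired with its Frobenius conjugate is an $\F_x$-rational divisor of degree $2$ on the special fibre, which Hensel lifts to an $F_x$-rational degree-$2$ divisor on $X^R_{F_x}$. To rule out odd degree, I exploit the natural $\Z/2$-colouring of $\Ver(\cT_x)$ by the parity of $\ord_x(\det)$: by construction $\G^x=(\fD^{x,2})^\times$ preserves this colouring, whereas $\gamma_x$ swaps the two colour classes since $\ord_x(\Nr(\gamma_x))=1$. Under \eqref{eqHaus} the non-trivial element of $\Gal(\cO_x^{(2)}/\cO_x)$ acts on $X^R_{\cO_x}\otimes\cO_x^{(2)}=(\G^x\bs\widehat\Omega_x)\otimes\cO_x^{(2)}$ by $\gamma_x\otimes\Frob_x^{-1}$, so the induced Frobenius action on the set of geometric components of the special fibre is exactly $\gamma_x$ acting on $\G^x\bs\Ver(\cT_x)$, and therefore has no fixed orbits. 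For any $F_x$-rational divisor $D$ of degree $d$, taking the schematic closure $\overline D$ in the Kurihara model (after adjusting $D$ by rational equivalence if necessary so that $\overline D$ avoids the singular points of the special fibre), I would expand
\[
d \;=\; (\overline D\cdot X^R_{\F_x}) \;=\; \sum_i(\overline D\cdot C_i)
\]
over geometric components $C_i$; Galois-invariance of $\overline D$ forces $(\overline D\cdot C_i)=(\overline D\cdot C_i^\sigma)$ on each size-$2$ Frobenius orbit, so $d\in 2\Z$.

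The main obstacle will be the structural claim in (2) identifying the Frobenius action on the geometric components with $\gamma_x$ and checking it has no fixed orbits. Both statements reduce to the colour-preservation/colour-swap dichotomy for $\G^x$ and $\gamma_x$ on $\cT_x$, which follows directly from the definition of $\fD^{x,2}$ via the parity condition on $\ord_x(\Nr)$ and from $\Nr(\gamma_x)A=\fp_x$.
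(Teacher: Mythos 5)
Parts (1) and the even-degree half of (2) are fine and essentially equivalent to the paper's argument (you use F.~K.~Schmidt's theorem and the uniformization \eqref{eqHaus} where the paper uses Weil bounds and a citation of \cite[Thm.~4.1]{PapLocProp}, but the mechanism --- produce points on the special fibre, lift by Hensel, take traces and differences --- is the same). Your bipartite-colouring claim is also correct: $\G^x$ preserves the parity of $\ord_x(\det)$ on $\Ver(\cT_x)$ while $\gamma_x$ reverses it, so every Frobenius orbit on the set of components of the geometric special fibre has even length.

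The gap is in the intersection-theoretic step ruling out odd $d$. The Kurihara model $\G^x\bs\widehat{\Omega}_x$ is \emph{not} regular in general: by \cite[Lem.~4.4]{PapLocProp} the quotient graph has edges of length $q+1$ precisely when $\deg(x)$ is even and $\Odd(R-x)=1$, and at the corresponding double points the local ring is $ts=\pi^{q+1}$. Your identity $d=\sum_i(\overline{D}\cdot C_i)$ with integer summands therefore requires $\overline{D}$ to avoid these singular points, and the proposed fix --- ``adjust $D$ by rational equivalence'' --- cannot be justified as stated: the locus of points reducing into the residue disk of a singular point is not Zariski-closed on the generic fibre, and, more to the point, those residue disks are exactly where an odd-degree (even degree-one) rational point could hide; if one existed, no linear-equivalence move could make it go away. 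The alternative of passing to the minimal resolution exposes the same problem concretely: a length-$(q+1)$ edge inverted by $\gamma_x$ (and $\gamma_x$ can only fix an edge by inverting it, since it swaps the colours of the two endpoints) is replaced by a chain of $q$ exceptional components which Frobenius reverses, and for $q$ odd the middle component is Frobenius-fixed --- a genus-zero curve over $\F_x$ with smooth rational points, which would produce degree-one divisors and defeat your pairing. So your argument implicitly needs the nontrivial arithmetic fact that no long edge is inverted by $\gamma_x$ (an analysis of the type $\gamma_x^2=\xi\wp_x$ carried out in the proof of Proposition~\ref{prop4.9}); this is precisely the content buried in \cite[Thm.~4.1]{PapLocProp}. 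The paper sidesteps all of this by a purely Galois-theoretic argument on the generic fibre: an odd-degree rational divisor forces, via an orbit count on its support (with a separate check for inseparability of the residue field extension), a closed point whose residue field has odd degree over $F_x$, contradicting the cited theorem. You should either quote that theorem as the paper does, or supply the missing analysis of the long edges.
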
  
 \begin{proof} For $n\geq 1$, denote by $\F_x^{(n)}$ the degree $n$ extension 
 of $\F_x$, and by $F_x^{(n)}$ the degree $n$ unramified extension 
 of $F_x$. 
 
 First, suppose $x\not\in R\cup \infty$. By Theorem \ref{thmDES}, $X^R_{\F_x}$ 
 is a smooth projective curve. 
 Weil's bound on the number of rational points on a curve over a finite field 
 guarantees the existence of an integer $N\geq 1$ such that 
 $X^R_{\F_x}(\F_x^{(n)})\neq \emptyset$ for any $n\geq N$. The geometric version of 
 Hensel's lemma \cite[Lem. 1.1]{JL} implies that $X^R_{F_x}(F_x^{(n)})\neq \emptyset$. Let 
 $P\in X^R_{F_x}(F_x^{(N+1)})$ and $Q\in X^R_{F_x}(F_x^{(N)})$. The 
 divisor $d\cdot Z$, where 
 $$
 Z=\sum_{\sigma\in \Gal(F_x^{(N+1)}/F_x)}P^\sigma- 
 \sum_{\tau\in \Gal(F_x^{(N)}/F_x)}Q^\tau,
 $$ 
 is $F_x$-rational and has degree $d$. 
 
Next, suppose $x=\infty$. By (\ref{eqBS}), $X^R_{\Fi}$ is Mumford uniformizable. This implies 
 that $X^R_{\Fi}$ has a regular model over $\cO_\infty$ whose special fibre 
 consists of $\F_\infty$-rational $\p^1$'s crossing at $\F_\infty$-rational points. In particular, 
 over any extension $\F_\infty^{(n)}$, $n\geq 2$, there are smooth 
 $\F_\infty^{(n)}$-rational points. Again by Hensel's lemma 
 \cite[Lem. 1.1]{JL}, there are $\Fi^{(n)}$-rational points on $X^R_{\Fi}$ for any $n\geq 2$. 
 The trace to $\Fi$ of such a point is in $\Div^n_{\Fi}(X^R)$. One obtains a rational divisor 
 of degree $1$ by taking the difference of degree $3$ and  $2$ rational divisors.  This proves (1). 
 
 Finally, suppose $x\in R$. By \cite[Thm. 4.1]{PapLocProp}, $X^R_{F_x}(F_x^{(2)})\neq \emptyset$. 
 Taking the trace of an $F_x^{(2)}$-rational point and multiplying the resulting divisor by $n$, we see that 
 $\Div^{2n}_{F_x}(X^R)\neq \emptyset$ for any $n$. 
Now suppose $d$ is odd but $\Div^d_{F_x}(X^R)\neq
\emptyset$. Let $Z\in \Div^d_{F_x}(X^R)$. Write $Z=Z_1-Z_2$, where $Z_1$
and $Z_2$ are effective divisors. Since
$\deg(Z)=\deg(Z_1)-\deg(Z_2)$ is odd, exactly one of these divisors
has odd degree. Denote by $F_x^\alg$ the algebraic closure of $F_x$, 
$F_x^\sep$ the separable closure of $F_x$, and let $G:=\Gal(F_x^\sep/F_x)$. 
Since $Z$ is $F_x$-rational, both $Z_1$ and $Z_2$ are $G$-invariant.
Assume without loss of generality that $\deg(Z_1)$ is odd. Write
$Z_1=Z_o+Z_e$, where 
$Z_o=\sum_{P\in X^R_{F_x}(F_x^\alg)} n_P P$, $n_P\in \Z$ are odd, and 
$Z_e=\sum_{Q\in X^R_{F_x}(F_x^\alg)} n_Q Q$, $n_Q\in \Z$ are even. 
Again $Z_o$ and $Z_e$ are $G$-invariant. Since
$\deg(Z_e)$ is even, $Z_o$ is non-zero. Since $\deg(Z_o)$ is
necessarily odd, the support of $Z_o$ must consist of an odd number
of points. This set of points is $G$-invariant. We have a finite set
of odd cardinality with an action of $G$, so one of the orbits
necessarily has odd length. Thus, there is a
point $P$ in the support of $Z$ such that the set of Galois conjugates of 
$P$ has odd cardinality. This implies that the separable degree 
$[F_x(P):F_x]_s$ is odd. If $P$ is not separable, then the degree of inseparability 
of $F_x(P)$ over $F_x$ divides the weight $n_P$ of $P$ in $Z$ (as $Z$ is 
$F_x$-rational). Since $n_P$ is odd by assumption, the inseparable 
degree $[F_x(P):F_x]_i$ is also odd. Overall, the degree of the 
extension $F_x(P)/F_x$ is odd. We conclude that there is a finite 
extension $K/F_x$ of odd degree such that $X^R_{F_x}(K)\neq \emptyset$. 
This contradicts \cite[Thm. 4.1]{PapLocProp}, so $\Div^d_{F_x}(X^R)$ 
must be empty. 
 \end{proof}
 
 \begin{thm}
 Consider the following conditions: 
  \begin{enumerate}
 \item $q$ is even;
 \item $q$ is odd, $\# R=2$, and $\Odd(R)=1$.
 \end{enumerate}
 If one of these conditions holds, then the deficient places for $X^R$ are 
 the places in $R$. Otherwise, there are no deficient places for $X^R$. 
 In either case, by Theorem \ref{thmPS}, the Jacobian variety of $X^R_F$ is even. 
 \end{thm}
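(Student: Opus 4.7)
The plan is to combine Proposition \ref{thm3.1} with a parity analysis of $g:=g(X^R)$. Proposition \ref{thm3.1}(1) says no $x\notin R$ is deficient, while part (2) gives $\Div^d_{F_x}(X^R)\neq\emptyset$ for $x\in R$ exactly when $d$ is even. Hence an $x\in R$ is deficient iff $g-1$ is odd, i.e.\ iff $g$ is even, and the deficient set is therefore either all of $R$ or empty. Since $\#R$ is even in both cases, the number of deficient places is even and Theorem \ref{thmPS} already forces the Jacobian to be even; what the statement adds is to say exactly when the ``all of $R$'' alternative occurs, i.e.\ when $g$ is even.

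Clearing denominators in the genus formula \eqref{eqGenus} gives the integer identity
\[
(g-1)(q^2-1)=\prod_{x\in R}(q^{\deg x}-1)-q(q-1)\,2^{\#R-1}\,\Odd(R),
\]
which I would analyse case by case. If $q$ is even, then $q^2-1$ is odd, each $q^{\deg x}-1$ is odd, and the second term is divisible by $2^{\#R}$; dividing by the odd $q^2-1$ forces $g-1$ odd, which is condition~(1). If $q$ is odd and $\Odd(R)=0$, choose $x_0\in R$ with $\deg(x_0)$ even, so $(q^2-1)\mid(q^{\deg x_0}-1)$; the remaining $\#R-1$ factors are each even (since $q$ is odd), and a short 2-adic count gives $v_2(g-1)\geq 1$, so $g$ is odd and no place is deficient.

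The delicate case is $q$ odd with $\Odd(R)=1$. Set $s_x:=(q^{\deg x}-1)/(q-1)=1+q+\cdots+q^{\deg x-1}$, a sum of an odd number of odd terms, hence odd. Rewriting the identity as
\[
(g-1)(q+1)=(q-1)^{\#R-1}\prod_{x\in R}s_x-q\cdot 2^{\#R-1},
\]
and putting $a=v_2(q-1)$, $b=v_2(q+1)$, I would check that in both sub-cases $v_2(g-1)=\#R-2$, which is $0$ iff $\#R=2$ (condition~(2)). When $q\equiv 1\pmod 4$ one has $a\geq 2$, $b=1$, so the first term has $v_2=(\#R-1)a\geq 2(\#R-1)$, strictly exceeding the second term's $v_2=\#R-1$, whence $v_2((g-1)(q+1))=\#R-1$ and $v_2(g-1)=\#R-2$ is immediate. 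When $q\equiv 3\pmod 4$ one has $a=1$, $b\geq 2$, and both terms carry $v_2=\#R-1$; here a mod-$2^b$ computation with $p:=(q-1)/2$, using $q=-1+2^bv$ with $v$ odd and $s_x\equiv 1\pmod{2^b}$ (telescoping an odd number of $(-1)^i$), yields $p^{\#R-1}\prod s_x-q\equiv(\#R-1)\cdot 2^{b-1}v\pmod{2^b}$, whose valuation is exactly $b-1$ because $\#R-1$ and $v$ are both odd. Then $v_2((g-1)(q+1))=(\#R-1)+(b-1)=\#R+b-2$ and $v_2(g-1)=\#R-2$ again.

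Putting the cases together, $g$ is even exactly when (1) or (2) holds, which is the claim. The main obstacle is the sub-case $q\equiv 3\pmod 4$ with $\Odd(R)=1$, where the two terms on the right of the key identity have matching 2-adic sizes and one must expand $(2^{b-1}v-1)^{\#R-1}$ to first order in $2^{b-1}v$ to pin down the next 2-adic digit; the other three cases reduce to straightforward 2-adic size comparisons.
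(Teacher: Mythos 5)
Your proposal is correct and follows exactly the paper's route: the paper's proof consists of citing Proposition \ref{thm3.1} plus the remark that ``an elementary analysis of (\ref{eqGenus}) shows that $g(X^R)$ is even if and only if one of the conditions holds.'' Your $2$-adic case analysis simply supplies the details of that elementary analysis (and your valuations check out, including the delicate $q\equiv 3\pmod 4$ sub-case), so there is nothing methodologically different to compare.
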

 \begin{proof}
 An elementary analysis of (\ref{eqGenus}) shows that the genus $g(X^R)$ 
 is even if and only if one of the above conditions holds. The claim of the 
 theorem then follows from Proposition \ref{thm3.1}.  
 \end{proof}
 
 Next, we examine the existence of rational divisors on the quotients of $X^R$ under the 
 action of Atkin-Lehner involutions. 
 For a fixed $y\in R$ we denote by $X^{(y)}$ the quotient curve $X^R/w_y$. 
 
 \begin{prop}\label{prop3.5} Denote by $\Div^d_{F_x}(X^{(y)})$ the set of 
 Weil divisors on $X^{(y)}_{F_x}$ which are rational over $F_x$ and have degree $d$. 
\begin{enumerate}
\item If $x\not \in R$ or $x=y$, then $\Div^d_{F_x}(X^{(y)})\neq \emptyset$ for any $d$. 
\item If $x\in R-y$ and $d$ is even, then $\Div^d_{F_x}(X^{(y)})\neq \emptyset$.
\item If $x\in R-y$ and $\Div^d_{F_x}(X^{(y)})\neq \emptyset$ for an odd $d$, then 
there is an extension $K/F_x$ of odd degree such that $X^{(y)}_{F_x}(K)\neq \emptyset$. 
\end{enumerate}
 \end{prop}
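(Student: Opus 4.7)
The plan is to reduce most cases of Proposition \ref{prop3.5} to Proposition \ref{thm3.1} via the degree-$2$ quotient map $\pi: X^R\to X^{(y)}$, which is defined over $F$ and whose pushforward $\pi_\ast$ sends an $F_x$-rational divisor of degree $d$ to an $F_x$-rational divisor of the same degree $d$. Applying $\pi_\ast$ to the divisors constructed in parts (1) and (2) of Proposition \ref{thm3.1} immediately settles case (1) when $x\notin R$ and case (2). For case (3), the argument copies the final paragraph of the proof of Proposition \ref{thm3.1} essentially unchanged: a putative $F_x$-rational divisor $Z$ of odd degree is written $Z=Z_1-Z_2$ with $Z_1$ of odd degree, then $Z_1=Z_o+Z_e$ is split into odd- and even-multiplicity parts; the $\Gal(F_x^\sep/F_x)$-invariance of $Z_o$ together with the oddness of $\deg(Z_o)$ forces some Galois orbit in $\Supp(Z_o)$ to have odd cardinality, and the oddness of the weight $n_P$ at a point $P$ in that orbit forces both the separable and inseparable parts of $[F_x(P):F_x]$ to be odd, so $K=F_x(P)$ works.

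The genuinely new input is case (1) with $x=y\in R$. Here my plan is to exploit the Hausberger uniformization (\ref{eqHaus}) specialized to the place $x=y$. Since the action of $w_y$ on the left-hand side of (\ref{eqHaus}) is implemented by $\gamma_y\otimes 1$, quotienting by $w_y$ amounts to adjoining this element to the descent datum $\gamma_y\otimes \Frob_y^{-1}$. The key identity
\[
(\gamma_y\otimes \Frob_y^{-1})(\gamma_y\otimes 1)^{-1}=1\otimes \Frob_y^{-1}
\]
shows that the group generated by these two elements already contains the bare Frobenius $1\otimes \Frob_y^{-1}$, so the $\cO_y^{(2)}$-twist collapses and a short group-theoretic manipulation gives an isomorphism of formal $\cO_y$-schemes
\[
\widehat{X}^{(y)}_{\cO_y}\cong \widetilde{\Gamma}^y\bs \widehat{\Omega}_y,
\]
where $\widetilde{\Gamma}^y\subset \GL_2(F_y)$ is the subgroup generated by $\Gamma^y$ and $\gamma_y$. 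This group is still discrete with compact image in $\PGL_2(F_y)$ because $\Gamma^y$ sits inside it with finite index.

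With this in hand, $X^{(y)}_{F_y}$ is Mumford uniformized over $F_y$ in the sense of Section \ref{ssAC}, so its special fibre is a union of $\F_y$-rational $\p^1$'s crossing at $\F_y$-rational points. The argument for the place $x=\infty$ in Proposition \ref{thm3.1} then applies verbatim: for every sufficiently large $n$ a smooth $\F_y^{(n)}$-rational point of the special fibre lifts via Hensel's lemma \cite[Lem.~1.1]{JL} to $X^{(y)}_{F_y}(F_y^{(n)})$, its trace produces an $F_y$-rational divisor of degree $n$, and taking combinations and differences recovers degree $1$ and hence every degree $d$. The main obstacle is thus solely the bookkeeping identifying the $w_y$-quotient of the Hausberger model as a Mumford quotient already over $\cO_y$; once that descent is in place the remainder of the proof is a direct translation of the $x=\infty$ argument in Proposition \ref{thm3.1}.
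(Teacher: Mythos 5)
Your proposal is correct and follows essentially the same route as the paper: pushforward along $\pi$ reduces (1) for $x\notin R$, (2), and (3) to Proposition \ref{thm3.1}, and for $x=y$ the paper likewise observes via (\ref{eqHaus}) that $X^R_{F_y}$ is the $w_y\otimes\Frob_y^{-1}$-twist of the Mumford curve $\G^y\bs\widehat{\Omega}_y$, so that the $w_y$-quotient is Mumford uniformizable without a twist and the $x=\infty$ argument applies. Your explicit computation that adjoining $\gamma_y\otimes 1$ to the descent datum produces $1\otimes\Frob_y^{-1}$ and collapses the twist onto $\langle\G^y,\gamma_y\rangle\bs\widehat{\Omega}_y$ is just a more detailed spelling-out of the step the paper states in one sentence.
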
  
 \begin{proof} Since the Atkin-Lehner involutions are defined in terms of the moduli problem, 
 the quotient morphism $\pi:X^R_{F_x}\to X^{(y)}_{F_x}$ is defined over $F_x$. 
 Hence, if $Z\in\Div^d_{F_x}(X^{R})$, then the pushforward $\pi_\ast(Z)$ is in $\Div^d_{F_x}(X^{(y)})$, so 
 Proposition \ref{thm3.1} implies (2) and (1) for $x\not \in R$. Part (3) follows from the argument 
 in the proof of Proposition \ref{thm3.1}. It remains to prove that 
 $\Div^d_{F_y}(X^{(y)})\neq \emptyset$ for any $d$. 
 By (\ref{eqHaus}) and ensuing discussion, $X^{R}_{F_y}$ is the 
 $w_y\otimes \Frob_y^{-1}$ quadratic twist of the Mumford curve $\G^y\bs \widehat{\Omega}_y$. 
 Hence the quotient $X^{(y)}_{F_y}$ of $X^R_{F_y}$ by $w_y$ 
 is Mumford uniformizable (without a twist) and one can argue as in the proof 
 of Proposition \ref{thm3.1} in the case when $x=\infty$. 
 \end{proof}
 
 \begin{prop}\label{prop4.9}
 Assume $q$ is odd, and $x, y\in R$ are two distinct places of even degrees. 
 If $d$ is odd, then $\Div_{F_x}^{(d)}(X^{(y)})=\emptyset$. 
 \end{prop}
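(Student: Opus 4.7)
The plan is to invoke Proposition~\ref{prop3.5}(3), reducing the claim to showing that $X^{(y)}(K)=\emptyset$ for every finite extension $K/F_x$ of odd degree. Assume for contradiction that $P\in X^{(y)}(K)$ with $[K:F_x]$ odd, and analyze the preimage under the quotient $\pi\colon X^R\to X^{(y)}$. If $\pi^{-1}(P)=\{Q\}$, then $Q$ is a $w_y$-fixed $K$-point of $X^R$; but the proof of Proposition~\ref{thm3.1}(2), via \cite[Thm.~4.1]{PapLocProp}, in fact shows $X^R(K)=\emptyset$ for any odd-degree extension $K/F_x$ with $x\in R$, a contradiction. Otherwise $\pi^{-1}(P)=\{Q,w_y(Q)\}$ with $Q$ defined over a quadratic extension $L/K$, and the non-trivial $\sigma\in\Gal(L/K)$ satisfies $\sigma(Q)=w_y(Q)$. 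Since $[K:F_x]$ is odd, $K$ is linearly disjoint from $F_x^{(2)}$ over $F_x$, so $L=K\cdot F_x^{(2)}$ is unramified quadratic over $K$, and $\sigma$ is identified with the non-trivial element of $\Gal(F_x^{(2)}/F_x)$.

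Next I would transfer the picture to Hausberger's uniformization~(\ref{eqHaus}). Over $\cO_x^{(2)}$, $X^R$ becomes the Mumford model $\G^x\bs\widehat\Omega_x$, the descent to $\cO_x$ is given by the twist $\gamma_x\otimes\Frob_x^{-1}$, and $w_y$ acts as $\gamma_y$. Setting $\G^x_*:=\langle\G^x,\gamma_y\rangle$, the quotient $X^{(y)}$ is, over $\cO_x^{(2)}$, the Mumford curve $\G^x_*\bs\widehat\Omega_x\otimes\cO_x^{(2)}$, with $F_x$-Frobenius on the special fibre given by $\gamma_x\cdot\Frob_x^{\mathrm{geom}}$. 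The Bruhat-Tits tree $\cT_x$ has the standard bipartition by the parity of $\ord_x(\det)$. Since $\ord_x(\Nr(\gamma_y))=\ord_x(\wp_y)=0$ (as $y\neq x$), the group $\G^x_*$ preserves this bipartition, while $\gamma_x$ swaps the two classes because $\ord_x(\Nr(\gamma_x))=1$. Consequently the $F_x$-Frobenius interchanges the two classes of irreducible components of $X^{(y)}_{\overline{\F}_x}$, so no component is geometrically defined over an odd-degree unramified extension of $\F_x$, and $P$ cannot reduce to a smooth point of $X^{(y)}_{\F_x}$.

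Hence $P$ must reduce to a node $\bar p$ whose two branches are swapped by Frobenius (the adjacent components lie in different bipartition classes). Using the Kurihara normal form $\widehat\cO^{\un}_x[[t,s]]/(ts-\varpi_x^m)$ at $\bar p$ together with the fact that Frobenius exchanges $t$ and $s$, a $K$-rational section extending $P$ corresponds to $t\in\cO_L$ with $t\,\sigma(t)=\varpi_x^m$, i.e.\ $N_{L/K}(t)=\varpi_x^m$. As $L/K$ is unramified quadratic, $\ord_K\circ N_{L/K}$ takes even values on $L^\times$; since $\ord_K(\varpi_x^m)=m\cdot e_{K/F_x}$ with $e_{K/F_x}$ odd, the edge-length $m$ must be even for such a reduction to exist.

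The final and most delicate step is to prove that every $\gamma_x$-fixed edge of the quotient graph $\overline{\G^x_*}\bs\cT_x$ has ODD length $m$, contradicting the previous step. The length equals $\#\Stab_{\overline{\G^x_*}}(\tilde e)$ for any lift $\tilde e\in\cT_x$, and this stabilizer is a finite subgroup of $\PGL_2(F_x)$ that, modulo $F^\times$, arises from units in orders of quadratic subfields of $\bD$. Here the three hypotheses enter decisively: $q$ odd rules out anomalous $2$-torsion in the relevant residue fields, and the even degrees of $x$ and $y$ control, via Eichler-type embedding-number computations for quadratic orders in quaternion orders, the splitting behaviour of the pertinent local quadratic fields at $x$, ultimately forcing every such edge stabilizer to have odd order. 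This parity computation is the technical heart of the proposition.
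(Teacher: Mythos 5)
Your reduction to ruling out odd-degree points via Proposition~\ref{prop3.5}(3), and your translation of the problem into the combinatorics of the twisted Mumford uniformization (\ref{eqHaus}), follow the same Jordan--Livn\'e-style strategy that the paper invokes (it cites \cite[p.~683]{JL3} for exactly this step), and the outline of your reduction-of-points analysis is broadly sound: a $K$-point with $[K:F_x]$ odd cannot reduce to a smooth point, must reduce to a node whose branches are interchanged by the twisted Frobenius, and the norm computation then forces the corresponding edge to have even stabilizer in $\langle\G^x,\gamma_y\rangle$. But your final step --- which you yourself flag as the technical heart --- is not a proof, and it is also not quite the right statement. By \cite[Lem.~4.4]{PapLocProp}, since $\deg(y)$ is even, every edge of $\G^x\backslash\cT_x$ already has length $1$; hence the stabilizer of an edge $s$ in the larger group is even precisely when $w_y(s)=s$. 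What must actually be excluded is therefore the simultaneous configuration $w_y(s)=s$ and $w_x(s)=\bar{s}$ (the case $w_xw_y(s)=\bar{s}$ collapses to this one). That is not a parity statement about torsion in residue fields or edge lengths: a priori such an edge could perfectly well exist, and no amount of ``$q$ odd rules out anomalous $2$-torsion'' will exclude it.

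The paper rules out this configuration by a genuinely arithmetic argument that your sketch does not contain. One lifts $s$ to $\tilde{s}\subset\cT_x$ and normalizes $\gamma_x,\gamma_y$ so that $\gamma_x\tilde{s}=\bar{\tilde{s}}$ and $\gamma_y\tilde{s}=\tilde{s}$; a unit computation in maximal $A$-orders of $\bD$ (using $(\fD')^\times\cong\F_q^\times$, which requires $\deg(y)$ even, via \cite[Lem.~1]{DvG}) gives $\gamma_x^2=\xi\wp_x$ and $\gamma_y^2=\xi\wp_y$ with $\xi$ a fixed non-square, and a stabilizer argument forces $\gamma_x\gamma_y=-\gamma_y\gamma_x$. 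Hence $\bD\cong H(\xi\wp_x,\xi\wp_y)$, and comparing the local Hilbert symbols at $x$ and $y$ with the known ramification locus $(R-x)\cup\infty$ of $\bD$ yields $\left(\frac{\wp_y}{\fp_x}\right)=1$ and $\left(\frac{\wp_x}{\fp_y}\right)=-1$, which contradicts quadratic reciprocity because $(-1)^{\frac{q-1}{2}\deg(x)\deg(y)}=1$. This is where the hypothesis that \emph{both} degrees are even actually does its work. Your appeal to ``Eichler-type embedding-number computations'' gestures in a plausible direction but supplies none of this; as written, the proof has a gap at exactly the decisive point.
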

 \begin{proof} Suppose $d$ is odd and $\Div_{F_x}^{(d)}(X^{(y)})\neq \emptyset$. Then 
 by Proposition \ref{prop3.5} there is an extension $K/F_x$ of odd degree such that 
 $X^{(y)}_{F_x}(K)\neq \emptyset$. The graph $G:=\G^x\bs \cT_x$ is the dual graph 
of the Mumford curve uniformized by $\G^x$; see $\S$\ref{ssAC}. 
From (\ref{eqHaus}) we get an action of $W$ on $G$. 
 The same argument as in \cite[p. 683]{JL3} shows that if $X^{(y)}_{F_x}(K)\neq \emptyset$, then 
 there is an edge $s$ in $G$ such that the following two conditions hold:
\begin{enumerate}
\item either $\ell(s)$ is even or $w_y(s)=s$;
\item either $w_x(s)=\bar{s}$ or $w_xw_y(s)=\bar{s}$.
\end{enumerate}
By \cite[Lem. 4.4]{PapLocProp}, 
an edge of $G$ has length $1$ or $q+1$, and the number of edges of length $q+1$ is equal to 
$$2^{\# R-1}\Odd(R-x)(1-\Odd(x)).$$ 
 From the assumption that $y$ has even degree we get that all 
 edges of $G$ have length $1$. Thus, for the existence of $K$-rational points on $X^{(y)}_{F_x}$
 we must have $w_y(s)=s$, and either $w_x(s)=\bar{s}$ or
$w_xw_y(s)=\bar{s}$. Obviously $w_y(s)=s$ and
$w_xw_y(s)=\bar{s}$ imply $w_x(s)=\bar{s}$. Therefore, 
the considerations reduce to a single case
$$
w_x(s)=\bar{s}\quad \text{and}\quad w_y(s)=s.
$$
 
 Let $\tilde{s}$ be an edge of $\cT_x$ lying above $s$. Modifying
$\gamma_x$ by an element of $\G^x$, we may assume that
$\gamma_x\tilde{s}=\bar{\tilde{s}}$. Let $v$ be one of the extremities of $\tilde{s}$. Then
$\gamma_x^2$ fixes $v$ and $\Nr(\gamma_x^2)$ generates $\fp_x^2$.
Thus, $\gamma_x^2\in F_x^\times\mu \GL_2(\cO_x)\mu^{-1}$ for some
$\mu\in \GL_2(F_x)$. By the norm condition, we get
$\gamma_x^2=\wp_x c$, where $c\in \mu \GL_2(\cO_x)\mu^{-1}$.
Hence $\ord_x(\Nr(\gamma_x^2/\wp_x))=0$. On the other hand, since
$\gamma_x^2/\wp_x$ also belongs to $\fD^x$, $c$ belongs to a maximal
$A$-order $\fD'$ in $\bD$ (in fact, $\fD'=\mu
\GL_2(\cO_x)\mu^{-1}\cap \fD^x$). Since $\Nr(c)$ has zero valuation
at every $v\in |F|-\infty$, $c\in (\fD')^\times$. By our assumption,
$\deg(y)$ is even and $\bD$ is ramified at $y$ and $\infty$, so
$(\fD')^\times\cong \F_q^\times$; cf. \cite[Lem. 1]{DvG}. Hence $\gamma_x^2=c\wp_x$, where
$c\in \F_q^\times$. Since $\deg(x)$ is even, $c$ must be a
non-square, as otherwise $\infty$ splits in $F(\sqrt{c\wp_x})$, which
contradicts the fact that this is a subfield of the quaternion algebra $\bD$
ramified at $\infty$. Fix a non-square $\xi\in \F_q^\times$. Overall, 
we conclude that the condition $w_x(s)=\bar{s}$ translates into 
$$
\quad \gamma_x^2=\xi \wp_x, 
$$
for an appropriate choice of $\gamma_x$. 
 
Modifying $\gamma_y$ by an element of $\G^x$, we can further assume that 
$\gamma_y(\tilde{s})=\tilde{s}$.  Next, note that $\gamma_y$ belongs to some maximal $A$-order $\fD''$ in
$\bD$. Since $\bD$ is ramified at $y$ and $\Nr(\gamma_y)A=\fp_y$, the element 
 $\gamma_y$ generates the radical of $\fD^x_y$. Hence
$\gamma_y^2=c\cdot \wp_y$, where $c\in \fD''$. Comparing the norms of both sides,
we see that $c$ must be a unit in $\fD''$. The same argument 
as with $\fD'$ shows that $(\fD'')^\times\cong \F_q^\times$, so after possibly scaling
$\gamma_y$ by a constant in $\F_q^\times$, we get
$$
\gamma_y^2=\xi \wp_y.
$$

Let $\langle \G^x, \gamma_y \rangle$ 
be the subgroup of $\GL_2(F_x)$ generated by $\G^x$ and $\gamma_y$.  
By construction, the element $\gamma_y$ fixes $\tilde{s}$. 
Since the edges of $G$ have length $1$, 
the stabilizer of $\tilde{s}$ in $\G^x$ is $(A^x)^\times$. Therefore, 
$$
\Stab_{\langle \G^x, \gamma_y \rangle}(\tilde{s})/(A^x)^\times\subset\F_q(\gamma_y)^\times. 
$$
On the other hand, $\gamma_x^{-1}\gamma_y\gamma_x(\tilde{s})=\tilde{s}$. We conclude 
that there is $n\in \Z$ and $a,b\in \F_q$ ($a,b$ are not both zero) such that 
$$
\gamma_y\gamma_x=\wp_x^n\gamma_x(a+b\gamma_y). 
$$
 Now the same argument as in the proof of Part (3) of Theorem 4.1 in \cite{PapLocProp} shows that 
for such an equality to be true we must have $n=0$, $a=0$ and $b=-1$, i.e., 
$$
\gamma_y\gamma_x=-\gamma_x\gamma_y. 
$$

The quadratic extensions $F(\gamma_x)$ and $F(\gamma_y)$ of $F$ are obviously linearly 
disjoint. Therefore, $\bD$ is isomorphic to the quaternion algebra 
$H(\xi\wp_x, \xi\wp_y)$ over $F$ having the presentation:
$$
i^2=\xi\wp_x, \quad j^2=\xi\wp_y, \quad ij=-ji. 
$$
 As is well-known, the algebra $H(\xi\wp_x, \xi\wp_y)$ ramifies (resp. splits) at $v\in |F|$ if 
 and only if the local symbol $(\xi\wp_x, \xi\wp_y)_v=-1$ (resp. $=1$); cf. \cite[p. 32]{Vigneras}. 
 On the other hand, by \cite[p. 210]{SerreLF} 
 $$
 (\xi\wp_x, \xi\wp_y)_x=\left(\frac{\xi\wp_y}{\fp_x}\right) \quad \text{and}\quad 
  (\xi\wp_x, \xi\wp_y)_y=\left(\frac{\xi\wp_x}{\fp_y}\right), 
 $$
 where $\left(\frac{\cdot}{\cdot}\right)$ is the Legendre symbol. 
 Since $x$ and $y$ have even degree, $\xi$ is a square modulo $\fp_x$ and $\fp_y$. Thus, 
$\left(\frac{\xi\wp_y}{\fp_x}\right)=\left(\frac{\wp_y}{\fp_x}\right)$ and 
$\left(\frac{\xi\wp_x}{\fp_y}\right)=\left(\frac{\wp_x}{\fp_y}\right)$. The algebra $\bD$ 
splits at $x$ and ramifies at $y$, so we must have 
$$
\left(\frac{\wp_y}{\fp_x}\right)=1\quad \text{and}\quad 
 \left(\frac{\wp_x}{\fp_y}\right)=-1. 
$$
But the quadratic reciprocity \cite[Thm. 3.5]{Rosen} says that 
$$
\left(\frac{\wp_y}{\fp_x}\right)\left(\frac{\wp_x}{\fp_y}\right)=
(-1)^{\frac{q-1}{2}\deg(x)\deg(y)}=1.
$$
This leads to a contradiction, so $\Div_{F_x}^{d}(X^{(y)})=\emptyset$. 
\end{proof}

\begin{thm}\label{thmMain} Assume $q$ is odd and all places in $R$ have even degrees. 
Consider the following three conditions:
\begin{enumerate}
\item $R=\{x,y\}$, i.e., $\# R=2$;
\item $\left(\frac{\wp_y}{\fp_x}\right)=-1$;
\item $\deg(y)$ is not divisible by $4$.
\end{enumerate}

If one of these conditions fails, then there are no deficient places for  $X^{(y)}$. 
If all three conditions hold, then $x$ is the only deficient place for $X^{(y)}$. In the first case 
the Jacobian of $X^{(y)}_{F}$ is even and in the second case it is odd.
\end{thm}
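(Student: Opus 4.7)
The plan is to identify the deficient places of $X^{(y)}$ exactly, and then invoke Theorem \ref{thmPS}.

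By Proposition \ref{prop3.5}, any place $v \notin R - y$ is automatically not deficient. For $v \in R - y$, Proposition \ref{prop3.5}(2) yields $F_v$-rational divisors of every even degree, while Proposition \ref{prop4.9} (which applies since $q$ is odd and both $v$ and $y$ have even degree) forbids rational divisors of any odd degree. Hence, for $v \in R - y$, deficiency is equivalent to $g(X^{(y)}) - 1$ being odd, and the entire statement reduces to a single parity question: for which configurations of $(R, x, y)$ is $g(X^{(y)}) - 1$ odd?

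I would analyze this parity using Riemann--Hurwitz for the tamely ramified double cover $\pi: X^R \to X^{(y)}$,
$$
g(X^{(y)}) = \frac{2g(X^R) + 2 - f}{4},
$$
where $f$ is the number of geometric fixed points of $w_y$ on $X^R$. The genus $g(X^R)$ is given by (\ref{eqGenus}) with $\Odd(R) = 0$, and because every $\deg v$ for $v \in R$ is even the factors $q_v - 1$ are divisible by $q^2 - 1$, so the 2-adic valuation of $g(X^R) - 1$ can be read off explicitly in terms of the $\deg v$. The count $f$ is standard Atkin--Lehner bookkeeping, although in the present setting one must invoke the Hausberger uniformization (\ref{eqHaus}) and its twist by $\gamma_x$: fixed points of $w_y$ correspond to conjugacy classes of optimal embeddings into $\cD^\infty$ of a quadratic $A$-order inside $F(\sqrt{\xi \wp_y})$ with $\xi \in \F_q^\times$ a nonsquare, exactly as in the analysis in the proof of Proposition \ref{prop4.9}. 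The count decomposes as a product of local embedding numbers over the places of $R \cup \{\infty\}$.

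Conditions (2) and (3) should appear as the local obstructions that flip the parity of $f$: condition (2), $\left(\frac{\wp_y}{\fp_x}\right) = -1$, is precisely the inertness of $\fp_x$ in $F(\sqrt{\xi \wp_y})$ and therefore controls the local embedding factor at $x$; condition (3), $\deg y \not\equiv 0 \pmod 4$, controls the local embedding factor at $y$ (and the contribution of $\infty$) through the parity of $\deg y / 2$. Condition (1), $\# R = 2$, keeps the product of local factors short enough that these two parity-changing factors actually propagate to the global count. The main technical obstacle is thus the combinatorial bookkeeping: one must compute $f \pmod 8$ and combine it with $g(X^R) \pmod 8$ to verify, case by case, that $g(X^{(y)}) - 1$ is odd exactly when (1), (2), and (3) all hold, and even otherwise. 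Once the parity of $g(X^{(y)})$ is pinned down, Theorem \ref{thmPS} provides the Jacobian parity: even when there are no deficient places, and odd when $\{x\}$ is the unique deficient place.
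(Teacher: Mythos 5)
Your reduction is exactly the paper's: Propositions \ref{prop3.5} and \ref{prop4.9} show that no place outside $R-y$ can be deficient, while a place $v\in R-y$ carries rational divisors of precisely the even degrees, so $v$ is deficient if and only if $g(X^{(y)})-1$ is odd; the Hurwitz formula $g(X^{(y)})=\frac{g(X^R)+1}{2}-\frac{\Fix(w_y)}{4}$ and the observation that $\frac{1}{2(q^2-1)}\prod_{v\in R}(q_v-1)$ is an even integer are also how the paper proceeds. The Poonen--Stoll step at the end is likewise identical.

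The gap is in the fixed-point count, which is where all the content of conditions (1)--(3) actually lives. The paper does not recount fixed points via local embedding numbers; it quotes \cite[Prop.\ 4.12]{PapHE}, which gives
$$
\Fix(w_y)=h(\xi\wp_y)\prod_{v\in R}\left(1-\left(\tfrac{\xi\wp_y}{\fp_v}\right)\right),
$$
where $h(\xi\wp_y)$ is the ideal class number of $\F_q[T,\sqrt{\xi\wp_y}]$. Your sketch describes the count as ``a product of local embedding numbers over the places of $R\cup\{\infty\}$'' and asserts that condition (3) is a local factor at $y$ and $\infty$. That cannot work: in the Eichler-type formula the product of local embedding numbers is multiplied by a \emph{global} class number, and condition (3) enters solely through the $2$-adic valuation of $h(\xi\wp_y)$. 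The paper needs Cornelissen's theorem \cite[Thm.\ 1]{Cornelissen} --- $h(\xi\wp_y)$ is always even, and divisible by $4$ exactly when $4\mid\deg(y)$ --- to see that $\frac{h(\xi\wp_y)}{4}\prod_{v\in R}\bigl(1-\bigl(\tfrac{\xi\wp_y}{\fp_v}\bigr)\bigr)$ is odd precisely when $\#R=2$, $\bigl(\tfrac{\wp_y}{\fp_x}\bigr)=-1$ (making the product equal to $2$ rather than $0$ or a multiple of $8$), and $h\equiv 2\pmod 4$. No purely local bookkeeping, however carefully done mod $8$, will produce this class-number parity; you need to name and use this global input (or reprove it) for the argument to close. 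A minor further point: the fixed points of $w_y$ are counted in \cite{PapHE} via the uniformization at $\infty$ from (\ref{eqBS}), not via the Cherednik--Drinfeld-type uniformization (\ref{eqHaus}) at $x$ that you invoke.
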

\begin{proof} Let $\Fix(w_y)$ be the number of fixed points of $w_y$ acting on $X^R_F$. 
 By the Hurwitz genus formula applied to the quotient map $\pi: X^R_F\to X^{(y)}_F$, 
the genus of $X^{(y)}_F$ is equal to 
 $$
 g(X^{(y)})=\frac{g(X^R)+1}{2}-\frac{\Fix(w_y)}{4}.
 $$
 (note that $\pi$ has only tame ramification). On the other hand, by \cite[Prop. 4.12]{PapHE}
$$
\Fix(w_y)=h(\xi\wp_y)\prod_{x\in R}\left(1-\left (\frac{\xi\wp_y}{\fp_x}\right)\right),
$$ 
where $\xi\in \F_q^\times$ is a fixed non-square, 
and $h(\xi\wp_y)$ denotes the ideal class number of the Dedekind ring $\F_q[T, \sqrt{\xi\wp_y}]$. 
(A remark is in order: In \cite{PapHE}, $w_y$ is defined analytically as the involution 
 of $\G^\infty\bs \Omega_\infty$ induced by $\la_y$, hence here we are implicitly 
 using the fact that (\ref{eqBS}) is compatible with the action of $W$.)  Combining these formulas, we get 
$$
g(X^{(y)})=1+\frac{1}{2(q^2-1)}\prod_{x\in R}(q_x-1)-\frac{h(\xi\wp_y)}{4}
\prod_{x\in R}\left(1-\left (\frac{\xi\wp_y}{\fp_x}\right)\right). 
$$ 
It is easy to see that the middle summand is always an even integer. Hence $g(X^{(y)})$ 
is even if and only if the last summand is odd. 
According to \cite[Thm. 1]{Cornelissen}, the 
class number $h(\xi\wp_y)$ is always even and it is divisible by $4$ if and only if $\deg(y)$ is divisible by $4$. 
Using this fact, one easily checks that the last summand is odd 
if and only if the three conditions are satisfied. 
The theorem now follows from Propositions \ref{prop3.5} and \ref{prop4.9}
\end{proof}

There are infinitely many pairs $R=\{x, y\}$ for which 
the conditions in Theorem \ref{thmMain} are satisfied. Indeed, fix an arbitrary $y$ 
such that $\deg(y)\equiv 2\ (\mod\ 4)$. 
By the function field analogue of Dirichlet's theorem \cite[Thm. 4.7]{Rosen}, there are infinitely many 
places $x\in |F|$ of even degree such that $\left(\frac{\wp_x}{\fp_y}\right)=-1$. The quadratic reciprocity 
implies that for such places $\left(\frac{\wp_y}{\fp_x}\right)=-1$. 
Hence there are infinitely many 
$X^{(y)}_F$ with odd Jacobians. 

\begin{rem}
 For a fixed $q$ there are only finitely many $R$ such that $X^{(y)}_F$ is hyperelliptic. 
 To see this, fix some $x\not\in R\cup \infty$. Corollary 4.8 in \cite{PapGenus} gives a lower 
 bound on the number of $\F_x^{(2)}$-rational points on $X^R_{\F_x}$. 
 Since the quotient map $X^R_{\F_x}\to X^{(y)}_{\F_x}$ 
 is defined over $\F_x$ and has degree $2$, from this bound we get  
 $$
 \# X^{(y)}_{\F_x}(\F_x^{(2)})\geq 
 \frac{1}{2}\# X^R_{\F_x}(\F_x^{(2)})\geq \frac{1}{2(q^2-1)}\prod_{z\in R\cup x}(q_z-1).  
 $$
 By \cite[Prop. 5.14]{LK}, if $X^{(y)}_F$ is hyperelliptic, then $X^{(y)}_{\F_x}$ is also 
 hyperelliptic. Hence there is a degree-2 morphism $X^{(y)}_{\F_x} \to \p^1_{\F_x}$ 
 defined over $\F_x$. 
 This implies 
 $$
 \# X^{(y)}_{\F_x}(\F_x^{(2)}) \leq 2 \# \p^1_{\F_x}(\F_x^{(2)}) =2(q_x^2+1). 
 $$
 Comparing with the earlier lower bound on $\# X^{(y)}_{\F_x}(\F_x^{(2)}) $, we get 
 \begin{equation}\label{eqeq}
 \prod_{z\in R\cup x}(q_z-1)\leq 4(q_x^2+1)(q^2-1). 
 \end{equation}
 Let $r=\sum_{z\in R}\deg(z)$. By  \cite[Lem.7.7]{PapLocProp}, 
 we can choose $x\not \in R\cup \infty$ such that 
 $\deg(x)\leq \log_q(r+1)+1$. Since 
 $\prod_{z\in R}(q_z-1)\geq q^{r/2}$, the inequality (\ref{eqeq}) implies 
 $q^{r/2}<32q^3 r$,  which obviously is possible only for finitely many $R$. 
 Therefore, only finitely many $X^{(y)}_F$ are hyperelliptic. 
 \end{rem}

Denote by $J^{(y)}$ the Jacobian variety of $X^{(y)}_F$.  
To conclude the paper, we explain how one can deduce in some cases 
that $\Sh(J^{(y)})$ is finite and has non-square order. (Of course, it is 
expected that Tate-Shafarevich groups are always finite.)

The definitions of the concepts discussed in this paragraph can be found in \cite{GR}. 
Let $\fn\lhd A$ be an ideal. 
Let $X_0(\fn)$ be the compactified Drinfeld modular curve classifying pairs 
$(\phi, C_\fn)$, where $\phi$ is a rank-$2$ Drinfeld 
$A$-modules and $C_\fn\cong A/\fr$ is a cyclic subgroup of $\phi$. Let $J_0(\fn)$ denote 
the Jacobian of $X_0(\fn)_F$. Let $\G_0(\fn)$ be the level-$\fn$ Hecke congruence 
subgroup of $\GL_2(A)$. Let 
$S_0(\fn)$ be the $\C$-vector space 
of automorphic cusp forms 
of Drinfeld type on $\G_0(\fn)$. Let $\T(\fn)$ be the commutative $\Z$-algebra generated 
by the Hecke operators acting on $S_0(\fn)$. The Hecke algebra $\T(\fn)$ is 
a finitely generated free $\Z$-module which also naturally acts on $J_0(\fn)$.  
Let $f\in S_0(\fn)$ be a newform which is an eigenform for all $t\in \T(\fn)$. Denote by $\la_f(t)$ 
the eigenvalue of $t$ acting on $f$. The map  
$\T(\fn)\to \C$, $t\mapsto \la_f(t)$, is an algebra homomorphism; denote its kernel 
by $I_f$. The image $I_f(J_0(\fn))$ is an abelian subvariety of $J_0(\fn)$ defined over $F$. 
Let $A_f:=J_0(\fn)/I_f(J_0(\fn))$. Similar to the case of classical modular Jacobians over $\Q$, 
the Jacobian $J_0(\fn)$ is isogenous 
over $F$ to a direct product of abelian varieties $A_f$, where each $f$ is a newform of some 
level $\fm|\fn$ (a given $A_f$ can appear more than once in the decomposition of $J_0(\fr)$).
This implies that $\Sh(J_0(\fn))$ is finite if and only if $\Sh(A_f)$ is finite for all such $A_f$. 
On the other hand, by the main theorem of  \cite{KT}, $\Sh(A_f)$ is finite 
if and only if 
$$
\ord_{s=1}L(A_f, s)=\mathrm{rank}_\Z A_f(F),
$$
where $L(A_f, s)$ denotes the $L$-function of $A_f$; see \cite{KT} or \cite{Schneider} for 
the definition.  

Let $J^R$ denote the Jacobian of $X^R_F$. Let $\fr:=\prod_{x\in R}\fp_x$. 
The Jacquet-Langlands correspondence over $F$ 
in combination with some other deep results implies that there 
is a surjective homomorphism $J_0(\fr)\to J^R$ defined over $F$; see \cite[Thm. 7.1]{PapJL}. 
Since by construction $X^{(y)}$ is a quotient of $X^R$, there is also a surjective 
homomorphism $J^R\to J^{(y)}$ defined over $F$. Thus, there is a 
surjective homomorphism $J_0(\fr)\to J^{(y)}$ defined over $F$. This implies that 
if $\Sh(J_0(\fr))$ is finite, then $\Sh(J^{(y)})$ is also finite. 

Now assume $q$ is odd, $R=\{x, y\}$, and $\deg(x)=\deg(y)=2$. In this case $J_0(\fr)$ is isogenous 
to $J^R$ as both have dimension $q^2$. The dimension of $J^{(y)}$ is $(q^2-1)/2$. 
 There are no old forms of level $\fr$, since $S_0(1)$, $S_0(\fp_x)$ and $S_0(\fp_y)$ 
 are zero dimensional. Let $f\in S_0(\fr)$ be a Hecke eigenform. 
 The $L$-function $L(f, s)$ of $f$ is a polynomial in $q^{-s}$ of degree 
$\deg(x)+\deg(y)-3=1$, cf. \cite[p. 227]{Tamagawa}. 
Hence $\ord_{s=0}L(f, s)\leq 1$. 
Using the analogue of the Gross-Zagier formula over $F$ \cite[p. 440]{RT}, one concludes 
that $\ord_{s=1}L(A_f, s)\leq \mathrm{rank}_\Z A_f(F)$. The converse inequality is known to hold 
for any abelian variety over $F$; see the main theorem of \cite{Schneider}. 
Hence $\Sh(A_f)$ is finite, which, as we explained, implies that $\Sh(J^{(y)})$ is also finite. 
It remains to
show that one can choose $x$ and $y$ so that the conditions in Theorem \ref{thmMain} hold, 
and therefore $\Sh(J^{(y)})$ is finite and has non-square order. 
We need to show that one can choose 
$x$ and $y$ such that $\deg(x)=\deg(y)=2$ and $\left(\frac{\wp_y}{\fp_x}\right)=-1$. 
Fix some $x\in |F|$ with $\deg(x)=2$. 
Consider the geometric quadratic extension $K:=F(\sqrt{\wp_x})$ of $F$, and let $C$ be the corresponding smooth 
projective curve over $\F_q$. Since $\deg(x)=2$, the genus of this curve is zero, so 
$C\cong \p^1_{\F_q}$. Using this observation, one easily computes that the number 
of places of $F$ of degree $2$ which remain inert in $K$ is
$(q^2-1)/4>0$. 
Thus, we can always choose $y\in |F|$ of degree $2$ such that 
$\left(\frac{\wp_y}{\fp_x}\right)=-1$.  



\end{document}